\newcommand{\R}{\mathbb{R}}
\providecommand{\keywords}[1]
{
\textbf{\text{Keywords: }} #1
}
\theoremstyle{plain}
\newtheorem{theorem}{Theorem}[section]
\newtheorem{proposition}[theorem]{Proposition}
\newtheorem{lemma}[theorem]{Lemma}
\theoremstyle{definition}
\newtheorem{remark}[theorem]{Remark}
\theoremstyle{definition}
\newtheorem{definition}[theorem]{Definition}
\numberwithin{equation}{section}
\begin{document}
\begin{sloppypar}	
\title{Fully nonlinear prescribed curvature problems on closed manifolds with negative curvature}
	
\author{
Jiaogen Zhang\thanks{School of Mathematics,  Hefei University of Technology,  Hefei, 230009, PR China. E-mail: zjgmath@hfut.edu.cn} 
}

\date{\today}
\maketitle

\begin{abstract}
In this manuscript, we  investigate fully nonlinear prescribed curvature problems for the modified Schouten tensor on closed Riemannian manifolds with negative curvature. We prove that whenever the corresponding concave elliptic operator satisfies a structural Condition $T$, which encompasses all $O(n)$-invariant G\r{a}rding-Dirichlet operator, such prescribed curvature problems are always solvable.
\end{abstract}

\keywords{Prescribed curvature problems; Modified Schouten tensor; A priori estimates; Negative curvature.}

\section{Introduction}
Let $(M,g)$ be a closed (i.e. compact without boundary) connected Riemannian manifold of dimension $n \geq 3$, and let
\[
A^{t}_{g}=\frac{1}{n-2}\big( \mathrm{Ric}_{g}-\frac{t}{2(n-1)}R_{g}g\big)\,, \qquad t\in (-\infty,1)
\]
denote the modified Schouten tensor of $g$, where $\mathrm{Ric}_g$ and $R_g$ are the Ricci tensor and the scalar curvature of $g$, respectively. This family of tensors generalizes two important geometric objects: when $t=0$, is proportional to the Ricci curvature, while the case $t=1$ recovers the Schouten tensor. In conformal geometry and in the analysis of fully nonlinear elliptic equations, the tensor $A^{t}_{g}$ plays a pivotal role. More than a mere formal extension, it provides an essential framework for investigating fully nonlinear generalizations of the prescribed curvature problems.\medskip

To formulate these fully nonlinear prescribed curvature problems, we denote $\lambda(g^{-1}A_{g}^{t})$ by the eigenvalues of the $(1,1)$-tensor $g^{-1}A_{g}^{t}$. Let $\Gamma$ be a  symmetric convex cone with vertex at the origin that satisfies
\[
\Gamma_{n}=\{\lambda\in \mathbb{R}^n\, :\, \lambda_{i}>0, \, i=1,\cdots,n\}\subset \Gamma \subset \{\lambda\in \mathbb{R}^n\, :\, \sum_{i=1}^{n}\lambda_{i}>0\}=\Gamma_{1}\,,
\]
and let $f\in C^{\infty}(\Gamma)\cup C^{0}(\partial\Gamma)$ be a concave elliptic operator that satisfies the following:
\begin{itemize}\itemsep 0.5em
\item[(f1)] $f$ is strictly elliptic, i.e. $f_{i}(\lambda):=\frac{\partial f}{\partial \lambda_{i}}>0$ for every $i$.
\item[(f2)] $f$ is concave, i.e. the matrix $\big(\frac{\partial^2 f}{\partial\lambda_{i}\partial\lambda_{j}}\big)$ is negative semi-definite.
\item[(f3)]  $f>0$ in $\Gamma$ and $f=0$ on $\partial\Gamma$.
\item[(f4)] $f$ is 1-homogeneous, i.e. $f(t\lambda)=tf(\lambda)$ for every $t>0$ and for all $\lambda\in \Gamma$.
\end{itemize}\medskip

Let $[g]$ denote the conformal class of $g$, and let $\psi$ be a given smooth positive  function on $M$. The fully nonlinear prescribed curvature problem consists in finding a conformal metric $\tilde{g} \in [g]$ such that the eigenvalues of the modified Schouten tensor satisfy a prescribed fully nonlinear elliptic equation. Precisely, one can consider the following two distinct regimes:
\begin{itemize}
\item  \textit{Positive curvature regime} (namely, $\lambda(g^{-1}A^{t}_{g})\in \Gamma$):
\begin{equation}\label{positive curvature}
f(\lambda(\tilde{g}^{-1}A^{t}_{\tilde{g}}))=\psi\,, \qquad \lambda(\tilde{g}^{-1}A^{t}_{\tilde{g}})\in \Gamma\,.
\end{equation}
\item \textit{Negative curvature regime} (namely, $\lambda(-g^{-1}A^{t}_{g})\in \Gamma$):
\begin{equation}\label{negative curvature}
f(\lambda(-\tilde{g}^{-1}A^{t}_{\tilde{g}}))=\psi\,, \qquad \lambda(-\tilde{g}^{-1}A^{t}_{\tilde{g}})\in \Gamma\,.
\end{equation}
\end{itemize}\medskip

A prominent subclass of these problems concerns the elementary symmetric functions. For each $k=1,2,\cdots,n$, let $\sigma_k:\Gamma_{k}\rightarrow \mathbb{R}$ be the $k$-th elementary symmetric function, defined by
\[
\sigma_{k}(\lambda)=\sum_{1 \leq i_1 < i_2 < \cdots < i_{k} \leq n}  \lambda_{i_1} \lambda_{i_2}  \cdots \lambda_{i_k}\,,
\]
where $\Gamma_{k}=\{\lambda\in \mathbb{R}^n\, :\, \sigma_{j}(\lambda)>0, \, j=1,\cdots,k\}.$  The prescribed $\sigma_k$-curvature problem seeks to find a conformal metric $\tilde{g} \in [g]$ on $M$ solving either
\begin{equation}\label{positive curvature k}
\sigma_{k}^{1/k}(\lambda(\tilde{g}^{-1}A^{t}_{\tilde{g}}))=\psi\,, \qquad \lambda(\tilde{g}^{-1}A^{t}_{\tilde{g}})\in \Gamma_{k}\,.
\end{equation}
in the positive curvature regime, or 
\begin{equation}\label{negative curvature k}
\sigma_{k}^{1/k}(\lambda(-\tilde{g}^{-1}A^{t}_{\tilde{g}}))=\psi\,, \qquad \lambda(-\tilde{g}^{-1}A^{t}_{\tilde{g}})\in \Gamma_{k}\,.
\end{equation}
in the negative curvature regime. Since the foundational contributions of Viaclovsky \cite{Via00a} and Chang-Gursky-Yang \cite{CGY02a}, problems \eqref{positive curvature k} and \eqref{negative curvature k} have attracted sustained attention  in vast body of literature. \medskip

In the positive curvature regime, Equation \eqref{positive curvature k} has been extensively studied and resolved in a number of important settings.  Specific cases include:  $(k,n)=(2,4)$ by Chang, Gursky and Yang \cite{CGY02a,CGY02b}; locally conformally flat manifolds by Li and Li \cite{LL03} (see also Guan and Wang \cite{GW03});  $k>\frac{n}{2}$ by Gursky and Viaclovsky \cite{GV07} and $k\geq \frac{n}{2}$ by Li and Nguyen \cite{LN14} (both of which hold for any smooth positive function $\psi$); for $k=2$ and $n>8$ by Ge and Wang \cite{GW06}. Furthermore, Equation \eqref{positive curvature k} was solved via a variational approach by Brendle and Viaclovsky \cite{BV04} for the case $k = \frac{n}{2}$, and by Sheng-Trudinger-Wang \cite{STW13} for $2 \leq k \leq \frac{n}{2}$. This approach applies to the case $k=2$ and is equivalent to $M$ being locally conformally flat when $k \geq 3$ as established in \cite{BG08}. \medskip

In the negative curvature regime, Gursky and Viaclovsky \cite{GV03} established that for all $t < 1$ and $\lambda(-g^{-1}A_{g}^{t}) \in \Gamma_{k}$, there exists a unique conformal metric $\tilde{g} \in [g]$ solving  \eqref{negative curvature k}. This result was later revisited by Li and Sheng \cite{LS05}, who provided an alternative treatment using parabolic methods. More recently,  Chen-Guo-He \cite{CGH22} and Chen-He \cite{CH24} have extended this line of work to more general Krylov-type equations. It is important to emphasize that the restriction $t < 1$ is likely optimal: as noted in \cite{Via00b}, ellipticity may fail for $t > 1$, and the crucial $C^2$ estimates are unavailable for $t = 1$. \medskip

For compact manifolds with boundary ($\partial M \neq \emptyset$), A. Li and Y-Y. Li \cite{LL03b} studied the problem of finding conformal metrics of constant $\sigma_k$-scalar curvature and constant mean curvature on $\partial M$. Guan \cite{Guan08} studied the fully nonlinear equation \eqref{negative curvature} under the restriction $t \leq 0$, noting that extending the results to the full range $t < 1$ is an interesting open problem. For more related developments,  we refer the reader to Duncan-Nguyen \cite{DL25,DL26} and Duncan-Wang \cite{DW25}, and the references therein.\medskip

In contrast to prior studies, this work examines the negative curvature case specifically for closed manifolds. We contribute by: (i) establishing solvability of fully nonlinear prescribed curvature problem \eqref{negative curvature} for all $t < 1$, requiring only Condition T on the operator $(f, \Gamma)$; and (ii) developing a self-contained proof for the gradient and Hessian estimates that relies entirely on Condition T, independent of techniques like Proposition 4.4 in \cite{GV03}.
\medskip

Our main result may be stated as follows.
\begin{theorem}\label{main}
Suppose the concave elliptic operator $(f,\Gamma)$ satisfies hypotheses $(f1)-(f4)$ and Condition T. Let $(M,g)$ be a closed Riemannian manifold with $\lambda(-g^{-1}A_{g}^{t})\in \Gamma$ for some $t<1$, and let $\psi$ be a smooth positive function on $M$. Then there exists a unique conformal metric $\tilde{g}\in [g]$ satisfying
\begin{equation}\label{curvature problem}
f(\lambda(-\tilde{g}^{-1}A_{\tilde{g}}^{t}))=\psi\,, \qquad \lambda(-\tilde{g}^{-1}A_{\tilde{g}}^{t})\in \Gamma.
\end{equation}
\end{theorem}

In a recent work, Harvey and Lawson \cite{HL23} demonstrated that the class of fully nonlinear elliptic operators satisfying Condition T is unexpectedly broad, and actually includes all $O(n)$-invariant G\r{a}rding-Dirichlet operators. These operators correspond to homogeneous real polynomials $F$ defined on $\mathrm{Sym}^{2}(\mathbb{R}^n)$ with $F(I_{n})>0$, which satisfy the following conditions:
\begin{itemize}
\item[(i)] For every $A\in \mathrm{Sym}^{2}(\mathbb{R}^n)$,  the univariate polynomial 
 $t\mapsto F(tI_{n}+A)$ has only real roots;
\item[(ii)] The G\r{a}rding cone $\widetilde{{\Gamma}}$ (defined as the connected component of $\mathrm{Sym}^{2}(\mathbb{R}^n)\setminus \{F=0\}$ that contains the identity matrix $I_{n}$) includes all positive definite matrices in $\mathrm{Sym}^{2}(\mathbb{R}^n)$, i.e. $ \{A\in \mathrm{Sym}^{2}(\mathbb{R}^n): A>0\} \subset  \widetilde{{\Gamma}}$;
\item[(iii)] $F$ is invariant under the conjugation action of $O(n)$ on $\mathrm{Sym}^{2}(\mathbb{R}^n)$.
\end{itemize}
Let $F(A)=f(\lambda(A))$, where $\lambda(A)$ denotes the eigenvalue tuple of $A$; correspondingly, $\widetilde{{\Gamma}}$ can be characterized as $\{A\in \mathrm{Sym}^{2}(\mathbb{R}^n): \lambda(A)\in \Gamma\}$ for some cone $\Gamma$ in $\mathbb{R}^n$. When no confusion arises, we also refer to $f$ as an $O(n)$-invariant G\r{a}rding-Dirichlet operator on the cone $\Gamma$.\medskip

An immediate consequence of Theorem \ref{main} is therefore the following result:
\begin{theorem}\label{GD}
Let $f$ be an $O(n)$-invariant G\r{a}rding-Dirichlet operator defined on a G\r{a}rding cone $\Gamma$. Suppose $(M,g)$ is a closed Riemannian manifold with $\lambda(-g^{-1}A_{g}^{t})\in \Gamma$ for some $t<1$, and let $\psi$ be a smooth positive function on $M$. Then there exists a unique conformal metric $\tilde{g}\in [g]$ satisfying the prescribed curvature problem \eqref{curvature problem}.
\end{theorem}

In particular, setting $t = 0$  and taking  $\psi$ to be a constant, we derive the following geometric consequence:
\begin{theorem}\label{Mp Ric}
Let $f$ be an $O(n)$-invariant G\r{a}rding-Dirichlet operator defined on a cone $\Gamma$. Suppose $(M,g)$ is a closed Riemannian manifold with $\lambda(-g^{-1}\mathrm{Ric}_{g})\in \Gamma$. Then there exists a unique conformal metric $\tilde{g}\in [g]$ such that 
\[
f(\lambda(-\tilde{g}^{-1}\mathrm{Ric}_{\tilde{g}})) =constant\,, \qquad \lambda(-\tilde{g}^{-1}\mathrm{Ric}_{\tilde{g}})\in \Gamma. 
\]
\end{theorem}
This result provides a direct generalization of the classical theorem of Gursky and Viaclovsky. Indeed, if we specialize to the case $(f,\Gamma)=(\sigma_{k}^{1/k},\Gamma_{k})$, then Theorem \ref{GD} reduces precisely to \cite[Theorem 1.1]{GV03}, which states that on any closed Riemannian manifold $(M,g)$ with  $\lambda(-g^{-1}\mathrm{Ric}_{g})\in \Gamma_{k}$, there exists a unique conformal metric $\tilde{g}\in [g]$ satisfying
\[
\sigma_{k}^{1/k}(\lambda(-\tilde{g}^{-1}\mathrm{Ric}_{\tilde{g}})) =constant\,, \qquad \lambda(-\tilde{g}^{-1}\mathrm{Ric}_{\tilde{g}})\in \Gamma_{k}. 
\]

We begin by reformulating the prescribed curvature problem \eqref{curvature problem} as a fully nonlinear elliptic partial differential equation. Under the conformal change $\tilde{g} = e^{2u}g$ for an unknown function $u$ on $M$, the symmetric $(0,2)$-tensor $A_{\tilde{g}}^{t}$ could be transformed as
\[
A^{t}_{\tilde{g}}=A^{t}_{g}-\nabla^2u-\frac{1-t}{n-2}(\Delta u)g+du\otimes du-\frac{2-t}{2}|\nabla u|^2g\,.
\]
For any symmetric $(0,2)$-tensor $W$, we introduce the operator notation
\begin{equation*}
F(W)=f(\lambda(g^{-1}W))\,.
\end{equation*}
Consequently,  Equation \eqref{curvature problem} can be expressed in terms of the background metric $g$ as the following scalar equation:
\begin{equation}\label{scalar curvature problem}
F\Big(\nabla^2u+\frac{1-t}{n-2}(\Delta u)g+\frac{2-t}{2}|\nabla u|^2g-du\otimes du-A^{t}_{g}\Big)=\psi e^{2u}\,.
\end{equation}
Thus, solving the prescribed curvature problem \eqref{curvature problem} is reduced to establishing uniform a priori estimates for solutions $u$ to the second-order  fully nonlinear elliptic equation \eqref{scalar curvature problem}.\medskip

We first observe that Equation \eqref{scalar curvature problem} falls within the general class of equations given by
\begin{equation}\label{extension}
F(\nabla^2u+S(x,\nabla u))=\psi(x,u)\,, \qquad x\in M\,,
\end{equation}
where $S = S(\cdot,\nabla u)$ denotes a symmetric $(0,2)$-tensor that also depends on $\nabla u$. In their seminal work \cite{CNS85}, Caffarelli-Nirenberg-Spruck investigated the Dirichlet problem for Equation \eqref{extension} in Euclidean space. Their research has been fundamentally influential in advancing the theory and applications of fully nonlinear elliptic and parabolic equations. Since then, numerous authors have made important contributions to this subject from diverse perspectives, including Chou-Wang \cite{CW}, Dong \cite{Dong06}, Guan \cite{Guan94,Guan99,Guan12}, Guan and Li \cite{GL96}, Harvey-Lawson \cite{HL09,HL11}, Ivochkina \cite{Iv91}, Ivochkina-Trudinger-Wang \cite{ITW04},  Krylov \cite{Kry83,Kry95,Kry97}, Li \cite{Li90}, Trudinger \cite{Tru95}, Trudinger-Wang \cite{TW99}, Urbas \cite{Urbas}, Wang \cite{W94}, Yuan \cite{Yuan01}, and many others. \medskip

The organization of this paper is as follows: In Section 2, we start by introducing the terminology of Condition $T$ and present several well-known examples of elliptic operators that satisfy this condition. We then analyze the ellipticity of the prescribed curvature problem \eqref{curvature problem}--a result that will be crucial for subsequent arguments.

The core argument in this work is Section 3, which relies on an intricate application of the maximum principle, first to obtain a $C^0$ estimate for the solution, then a gradient bound, and finally the full $C^2$ estimate by constructing appropriate auxiliary functions and controlling the relevant tensor terms.

With these uniform estimates established, we proceed in Section \ref{proof} to prove the main existence theorem. This is achieved via the continuity method. The a priori estimates are essential for the closedness part, while the ellipticity result is used in the openness argument via the invertibility of the linearized operator. 

In the final section, we present an application of the main theorem to the prescribed $\mathcal{M}_p$-curvature problem in the negative case.
\medskip

\noindent\textbf{Acknowledgements}
The author was supported by the National Natural Science Foundation of China (Grant No. 12401065) and the Initial Scientific Research Fund of Young Teachers in Hefei University of Technology (Grant No. JZ2024HGQA0119). The author is deeply grateful to his thesis advisor Prof. Xi Zhang for his constantly guidance and encouragement. He also wishes to thank Prof. Li Chen for helpful comments and suggestions. 
\medskip

\section{Preliminaries}

\subsection{Condition T}
Under a suitable structural hypothesis, the theory developed for Monge-Amp\`ere equations can often be extended to much broader classes of fully nonlinear concave elliptic equations. The condition introduced below provides one natural candidate that applies in many important settings.
\begin{definition}\label{T}
We say a concave elliptic operator $f: \Gamma \rightarrow \mathbb{R}$ satisfies the Condition T if there exists a uniform  constant  $\mathscr{T}>0$ such that  
\begin{equation}\label{det}
\prod_{i=1}^{n}\frac{\partial f}{\partial \lambda_{i}}\geq  {\mathscr{T}}\,.
\end{equation}  
\end{definition}
Heuristically, Condition $T$ enables us to exploit a known upper bound for $\{\frac{\partial f}{\partial \lambda_{i}}\}_{i=1}^{n}$  to derive a corresponding lower bound, thereby achieving two-sided control. Such arguments are common in establishing a priori estimates, particularly when the maximum principle is applied to the linearized operator. We also note that this hypothesis has been employed in the recent work of Guo and Phong \cite{GP24}, where they investigate $L^{\infty}$ estimates for fully nonlinear equations on non-K\"ahler manifolds. \medskip

For an arbitrary $O(n)$-invariant G\r{a}rding-Dirichlet operator $f$ on the G\r{a}rding cone $\Gamma$, Harvey and Lawson \cite{HL23} established the following strict ellipticity property:
\begin{equation}\label{G-D}
f(\lambda+\tau)\geq f(\lambda)+\Big(\prod_{i=1}^{n}  \tau_i\Big)^{\frac{1}{n}} \qquad \text{for all } \lambda\in \Gamma\, \text{ and } \tau\in \Gamma_{n}\,.
\end{equation}
It can be shown that inequalities \eqref{det} and \eqref{G-D} are equivalent in a certain sense; see, for example, \cite[Lemma 2.4]{Zhang25}.  Consequently, Condition T is satisfied by every $O(n)$-invariant G\r{a}rding-Dirichlet operator. 

\begin{remark}\label{example}
\begin{itemize}\itemsep 0.5em
\item[(1)] Examples of $(f,\Gamma)$ satisfying Condition  T including 
\begin{itemize}
\item[(i)] Hessian operators $(\sigma_{k}^{1/k},\Gamma_{k})$ for $1\leq k\leq n$. The verification of Condition T can be found in the work of Wang \cite{W94}.
\item[(ii)] $p$-Monge-Amp\`ere operator $(\mathcal{M}^{1/\binom{n}{p}}_{p},\mathcal{P}_{p})$ for $1\leq p\leq n$, where $\mathcal{M}_{p}$ is a symmetric polynomial given by
\begin{equation}
\mathcal{M}_{p}(\lambda)=\prod_{1\leq i_1<i_2<\cdots <i_{p}\leq n} (\lambda_{i_1}+\lambda_{i_2}+\cdots +\lambda_{i_p})\,,
\end{equation}
where
\begin{equation}
\mathcal{P}_{p}=\{ \lambda\in \R^n \,: \,\lambda_{i_1}+\cdots +\lambda_{i_{p}}>0,\, 1\leq i_1<\cdots< i_{p}\leq n\}
\end{equation}
The proof of Condition T has been provided in \cite{D21,ZZ25}. 
\end{itemize} 
\item[(2)] Notably for all $n\geq k>l\geq 1$, the Hessian quotient equations  $((\frac{\sigma_{k}}{\sigma_{l}})^{1/(k-l)}, \Gamma_{k})$  fail to meet the  Condition T.
\end{itemize}
\end{remark}

\subsection{Ellipticity of the prescribed curvature problems}
Let us define an endomorphism
\begin{equation}\label{Wu}
W[u]:=\nabla^2u+\frac{1-t}{n-2}(\Delta u)g+\frac{2-t}{2}|\nabla u|^2g-du\otimes du-A^{t}_{g}\,.
\end{equation}
Then we can reformulate \eqref{scalar curvature problem} as the following simpler form
\begin{equation}\label{scalar two}
F\big(W[u]\big) =\psi e^{2u}\,.
\end{equation}
\begin{lemma}\label{elliptic 1}
Suppose $\lambda(-g^{-1}A_{g}^{t})\in \Gamma$ for some $t\leq 1$, then for any solution $u$ of \eqref{scalar two}, we have $W[u] \in \Gamma$.
\end{lemma}
\begin{proof}
At a minimum point $x_0 \in M$ of $u$, we have $\nabla u(x_0)=0$ and $\nabla^2 u(x_0) \geq 0$. Using the hypothesis $\lambda(-g^{-1}A_{g}^{t})\in \Gamma$ and the expression for $W[u]$, we obtain
\[
W[u](x_0)=\nabla^2u(x_0)+\frac{1-t}{n-2}(\Delta u(x_0))g-g^{-1}A_{g}^{t}(x_0)\geq -g^{-1}A_{g}^{t}(x_0)\,. 
\]  
This implies $W[u](x_0) \in \Gamma$ as we have assumed $\lambda(-g^{-1}A_g^t(x_0)) \in \Gamma$. By the connectedness of $\Gamma$, the result $W[u] \in \Gamma$ follows.
\end{proof}

\begin{lemma}\label{elliptic}
Suppose $\lambda(-g^{-1}A_{g}^{t})\in \Gamma$ for some $t\leq 1$, then Equation \eqref{scalar two} is elliptic.
\end{lemma}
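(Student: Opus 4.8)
The equation \eqref{scalar two} is fully nonlinear in $\nabla^2u$, so to say it is \emph{elliptic} at a solution $u$ means that the symmetric matrix obtained by differentiating the left-hand side of \eqref{scalar two} with respect to the entries $\nabla^2_{ij}u$ (equivalently, the second-order coefficient of the linearized operator) is positive definite. The plan is therefore to compute this coefficient matrix explicitly and deduce its positivity from the ellipticity of $\mathcal{M}_p$ on $\mathcal{P}_p$ together with the sign condition $t\leq 1$; the preceding lemma supplies the fact that $\overline{\nabla}^2u$ actually lies in $P_p$, which is what makes the ellipticity of $\mathcal{M}_p$ applicable.

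Concretely, I would set $W:=\overline{\nabla}^2u$ and $F^{ij}:=\partial M_p/\partial W_{ij}$ evaluated at $W$. Since $M_p=\mathcal{M}_p^{1/\binom{n}{p}}$ and $\mathcal{M}_p>0$ on $\mathcal{P}_p$, the chain rule gives $F^{ij}=\tfrac{1}{\binom{n}{p}}\mathcal{M}_p^{1/\binom{n}{p}-1}\,\partial\mathcal{M}_p/\partial W_{ij}$, a positive multiple of $D\mathcal{M}_p$; as $\mathcal{M}_p$ is elliptic on $\mathcal{P}_p$ and $W\in P_p$ by the previous lemma, $(F^{ij})$ is positive definite (diagonalizing $W$ one sees $\partial\mathcal{M}_p/\partial\lambda_k=\mathcal{M}_p(\lambda)\sum_{I\ni k}(\sum_{i\in I}\lambda_i)^{-1}>0$ on $\mathcal{P}_p$). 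Now, among the terms defining $\overline{\nabla}^2u$, only $\nabla^2u+\tfrac{1-t}{n-2}(\Delta u)g$ is of second order in $u$, while $\tfrac{2-t}{2}|\nabla u|^2g-du\otimes du$, the fixed tensor $A^t_g$, and the right-hand side $fe^{2u}$ are of order at most one and hence do not affect ellipticity. Differentiating through, the coefficient of $\nabla^2_{ij}u$ in the left-hand side of \eqref{scalar two} is
\[
F^{kl}\Big(\delta^i_k\delta^j_l+\tfrac{1-t}{n-2}\,g_{kl}\,g^{ij}\Big)=F^{ij}+\tfrac{1-t}{n-2}\,(\operatorname{tr}_gF)\,g^{ij},
\]
where $\operatorname{tr}_gF=g_{kl}F^{kl}>0$ because $(F^{ij})$ is positive definite.

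It then remains only to observe that, since $n\geq 3$ and $t\leq 1$, the constant $\tfrac{1-t}{n-2}$ is nonnegative, so the matrix above is the sum of the positive-definite matrix $(F^{ij})$ and the nonnegative-definite matrix $\tfrac{1-t}{n-2}(\operatorname{tr}_gF)\,g^{ij}$; it is therefore positive definite, which is exactly the claim. I do not expect any genuine obstacle here: the two points that require a moment's care are that the $\binom{n}{p}$-th root transfers the positivity of $D\mathcal{M}_p$ only up to a positive scalar (harmless), and that the only place the hypothesis $t\leq 1$ enters is the $(\Delta u)g$ term, whose contribution $\tfrac{1-t}{n-2}(\operatorname{tr}_gF)\,g^{ij}$ has a nonnegative coefficient precisely when $t\leq 1$ and so cannot destroy ellipticity — for $t>1$ this term turns negative, which is the source of the potential loss of ellipticity mentioned in the introduction.
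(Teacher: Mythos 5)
Your proof is correct and follows essentially the same route as the paper: both compute the linearization, identify the second-order coefficient matrix as $M_p^{ij}+\tfrac{1-t}{n-2}(\mathrm{tr}_g M_p^{\cdot\cdot})g^{ij}$, and deduce positive definiteness from $\overline{\nabla}^2u\in P_p$ (supplied by the previous lemma) together with $\tfrac{1-t}{n-2}\geq 0$. If anything you are slightly more explicit than the paper, which leaves the role of $t\leq 1$ implicit when asserting that the matrix $\overline{M}_p^{ij}$ is positive definite.
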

\begin{proof}
Given a smooth function $v$, define a one-parameter family $u_{s}=u+s v$. Let us consider
\[
G[u_s,\nabla u_s,\nabla^2u_s]=F\big(W[u_{s}]\big)-\psi e^{2u_{s}}\,.
\]
For any symmetric matrix $U=\{U_{ij}\}$ with $\lambda(g^{-1}U)\in \Gamma$,  the linearized operator is represented by the positive-definite matrix
\[
\big\{F^{ij}(U)\big\}=\bigg\{\frac{\partial F}{\partial U_{ij}}\bigg\}\,.
\]
A direct computation yields
\[
\frac{d}{ds}W[u_{s}]\mid_{s=0}=\nabla^2v+\frac{1-t}{n-2}(\Delta v)g+\chi(dv)\,,  
\]
with $\chi(dv)=(2-t)\langle du,dv\rangle g -2du\otimes dv$ being linear in $dv$. It follows that
\[
\begin{split}
&\frac{d}{ds}F\big(W[u_{s}]\big)\mid_{s=0}\\
&=F^{ij}\big(W[u]\big)\Big(\nabla_{i}\nabla_{j}v+\frac{1-t}{n-2}(\Delta v)g_{ij}+\chi_{ij}(dv) \Big)\\
&=\Big(F^{ij}\big(W[u]\big)+\frac{1-t}{n-2}F^{kl}\big(W[u]\big)g_{kl}g^{ij}\Big)\nabla_{i}\nabla_{j}v+F^{ij}\big(W[u]\big)\chi_{ij}(dv)\,.
\end{split}
\]
By Lemma \ref{elliptic 1}, we know that the following matrix 
\begin{equation}\label{cofactor}
\big\{\overline{F}^{ij}(W[u])\big\}=\Big\{F^{ij}\big(W[u]\big)+\frac{1-t}{n-2}F^{kl}\big(W[u]\big)g_{kl} g^{ij}\Big\}
\end{equation}
is positive definite. Then we conclude that the operator
\[
\begin{split}
L(v):&=\frac{d}{ds}G[u_s,\nabla u_s,\nabla^2u_s]\mid_{s=0}\\
&=\overline{F}^{ij}(W[u])\nabla_{i}\nabla_{j}v+F^{ij}\big(W[u]\big)\chi_{ij}(dv)-\psi e^{2u}v
\end{split}
\]
is indeed elliptic.
\end{proof}

\section{A priori estimates}

\subsection{Zeroth order estimate}
In this subsection, we prove the following $C^{0}$ estimate:
\begin{proposition}
Assume $\lambda(-g^{-1}A_{g}^{t})\in \Gamma$ for some $t\leq 1$. Then there exists a constant $C_{0}$ depending only on $\psi$, $A_{g}^{t}$, and $n$, such that for any solution $u$ of \eqref{scalar curvature problem}, we have
\begin{equation*}
\sup_{M}|u|\leq C_{0}\,.
\end{equation*}
\end{proposition}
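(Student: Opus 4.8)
The plan is to derive the $C^0$ bound by applying the maximum principle at the extrema of $u$, exploiting the structure of the operator $M_p$ on the cone $\mathcal{P}_p$ together with Lemma~\ref{elliptic}. First I would work at a maximum point $x_1 \in N$ of $u$, where $\nabla u(x_1) = 0$ and $\nabla^2 u(x_1) \leq 0$. At such a point the gradient terms $\frac{2-t}{2}|\nabla u|^2 g - du\otimes du$ vanish, so
\[
\overline{\nabla}^2 u(x_1) = \nabla^2 u(x_1) + \frac{1-t}{n-2}(\Delta u(x_1))g - A_g^t(x_1) \leq -A_g^t(x_1),
\]
using $t \leq 1$ and $\Delta u(x_1) \leq 0$. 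Since $M_p = \mathcal{M}_p^{1/\binom{n}{p}}$ is monotone increasing with respect to the partial ordering of symmetric tensors on the cone $P_p$ (each factor $\lambda_{i_1}+\cdots+\lambda_{i_p}$ is increasing, and all factors are positive there), and since $\overline{\nabla}^2 u(x_1) \in P_p$ by Lemma~\ref{elliptic}, we get $fe^{2u(x_1)} = M_p(\overline{\nabla}^2 u(x_1)) \leq M_p(-A_g^t(x_1))$. This bounds $e^{2u(x_1)}$ by $\sup_N M_p(-A_g^t)/\inf_N f$, hence $\sup_N u = u(x_1) \leq C$ where $C$ depends only on $f$, $A_g^t$, $n$, $p$.

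For the lower bound I would argue symmetrically at a minimum point $x_0 \in N$ of $u$, where $\nabla u(x_0) = 0$, $\nabla^2 u(x_0) \geq 0$, and $\Delta u(x_0) \geq 0$. Then
\[
\overline{\nabla}^2 u(x_0) = \nabla^2 u(x_0) + \frac{1-t}{n-2}(\Delta u(x_0))g - A_g^t(x_0) \geq -A_g^t(x_0),
\]
so monotonicity of $M_p$ on $P_p$ gives $fe^{2u(x_0)} = M_p(\overline{\nabla}^2 u(x_0)) \geq M_p(-A_g^t(x_0)) \geq \inf_N M_p(-A_g^t) > 0$, the last inequality because $-A_g^t \in P_p$ is a compactness-of-$N$ statement forcing a positive lower bound on the product of the (continuous, positive) partial sums of eigenvalues. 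This yields $e^{2u(x_0)} \geq \inf_N M_p(-A_g^t)/\sup_N f > 0$, hence $\inf_N u = u(x_0) \geq -C$ for a constant of the same dependence. Combining the two bounds gives $\sup_N |u| \leq C_0$.

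The main technical point — and the place where one must be slightly careful — is the monotonicity claim: I need that if $V, W \in P_p$ are symmetric $2$-tensors with $V \geq W$ (i.e.\ $V - W \geq 0$), then $M_p(V) \geq M_p(W)$. This follows because the map $t \mapsto M_p(W + t(V-W))$ has nonnegative derivative $M_p^{ij}(W+t(V-W))(V-W)_{ij} \geq 0$ for $t \in [0,1]$, since $\{M_p^{ij}\}$ is positive semidefinite on $P_p$ (ellipticity of $\mathcal{M}_p$ on $\mathcal{P}_p$, noted in the introduction) and $V - W \geq 0$; one also needs the segment $W + t(V-W)$ to stay in $P_p$, which holds because $P_p$ is convex and both endpoints lie in it. A subtlety is that at $x_1$ we only directly know $-A_g^t(x_1) \in P_p$ and $\overline{\nabla}^2 u(x_1) \leq -A_g^t(x_1)$; to conclude $\overline{\nabla}^2 u(x_1) \in P_p$ I should either invoke the already-proved Lemma~\ref{elliptic} (which gives $\overline{\nabla}^2 u \in P_p$ globally) or note that the continuity-method solutions are constructed to satisfy the cone condition, so this is not an obstacle. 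The only genuinely new ingredient beyond Lemma~\ref{elliptic} is thus the elementary monotonicity lemma and the compactness argument giving $0 < \inf_N M_p(-A_g^t) \leq \sup_N M_p(-A_g^t) < \infty$.
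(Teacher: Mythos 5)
Your argument is correct and is essentially the same as the paper's: evaluate at a maximum and minimum of $u$, where the gradient terms vanish and the Hessian and Laplacian have a sign, compare $\overline{\nabla}^2 u$ with $-A_g^t$, and use monotonicity of $M_p$ on the cone $P_p$ to bound $fe^{2u}$ from above and below. Your explicit justification of the monotonicity (via convexity of $P_p$ and positive semidefiniteness of $\{M_p^{ij}\}$) spells out a step the paper leaves implicit, but the route is identical.
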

\begin{proof}
We will prove this result using the standard maximum principle. Since $M$ is compact,   assume that $u$ attain its minimum at a point $x_{min}\in M$. At this point, we have $\nabla u(x_{min})=0$ and $\nabla^2 u(x_{min})$ is positive semi-definite. Consequently
\[\nabla^2u+\frac{1-t}{n-2}(\Delta u)g\geq 0\]
at $x_{min}$, which implies
\[
F(-A^{t}_{g}(x_{min}))\leq \psi(x_{min})e^{2u(x_{min})}\,.
\]
It follows that
\[
u(x_{min})\geq \frac{1}{2}\log \bigg(\frac{F(-A^{t}_{g}(x_{min}))}{\psi(x_{min})} \bigg)\geq \frac{1}{2}\min_{M}\log \bigg(\frac{F(-A^{t}_{g})}{\psi} \bigg)\,.
\]
Similarly, if $u$ attain its maximum at a point $x_{max}\in M$, then
\[
u(x_{max})\leq \frac{1}{2}\log \bigg(\frac{F(-A^{t}_{g}(x_{max}))}{\psi(x_{max})} \bigg)\geq \frac{1}{2}\max_{M}\log \bigg(\frac{F(-A^{t}_{g})}{\psi} \bigg)\,.
\]
This completes the proof.
\end{proof}

\subsection{First order estimate}

In this subsection, we shall concentrate on the following global gradient estimate on  \eqref{scalar curvature problem}.
\begin{proposition}\label{gradient estimate}
Assume  $\lambda(-g^{-1}A_{g}^{t})\in \Gamma$ for some $t\leq 1$. Let $u\in C^{3}(M)$ be a solution to  \eqref{scalar curvature problem}. Then we have
\begin{equation*}
\sup_{M}|\nabla u|\leq C_{1}\,,
\end{equation*}
where $C_{1}$ is a constant depends only on $\psi$, $A_{g}^{t}$, $n$, and   $C_{0}=\|u\|_{C^{0}}$.
\end{proposition}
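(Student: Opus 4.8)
The plan is a maximum-principle argument on an exponentially weighted gradient function, with the weight chosen only at the very end. Since $N$ is compact and $\|u\|_{C^0}\le C_0$ is already available, it suffices to bound $|\nabla u|$ at an interior maximum point $x_0$ of $\phi:=|\nabla u|^2 e^{h(u)}$, where $h$ is a smooth bounded function to be specified; we may assume $|\nabla u(x_0)|$ is as large as we please. At $x_0$ I work in a $g$-orthonormal frame diagonalizing $\overline{\nabla}^{2}u(x_0)$, so that $M_p^{ij}(\overline{\nabla}^{2}u)$ and the positive-definite matrix $\overline{M}_p^{ij}$ of \eqref{cofactor} are simultaneously diagonal with positive diagonal entries, and $\mathrm{tr}(M_p):=\sum_i M_p^{ii}$ and $\mathrm{tr}(\overline{M}_p):=\sum_i\overline{M}_p^{ii}$ are comparable. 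The first-order condition $\nabla\phi(x_0)=0$ yields $\nabla^2 u(\nabla u)=-\tfrac12 h'(u)|\nabla u|^2\,\nabla u$; since every summand of $\overline{\nabla}^{2}u$ other than $-A_g^t$ fixes the vector $\nabla u$, this forces $\overline{\nabla}^{2}u(\nabla u)=\theta\,\nabla u-A_g^t(\nabla u)$ for an explicit scalar $\theta$, so up to the bounded error $A_g^t(\nabla u)$ the vector $\nabla u$ lies in the span of the eigenvectors of $\overline{\nabla}^{2}u$ with eigenvalue near $\theta$ — an alignment I will use to estimate $M_p^{ij}u_iu_j$ sharply.

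The second-order condition $\overline{M}_p^{ij}\nabla_i\nabla_j\phi(x_0)\le 0$ is then expanded. Differentiating $\phi=|\nabla u|^2 e^{h}$ twice produces the nonnegative term $e^{h}\overline{M}_p^{ij}\nabla_k\nabla_i u\,\nabla_k\nabla_j u$ (summed over $k$), the third-order term $e^{h}u_k\,\overline{M}_p^{ij}\nabla_i\nabla_j\nabla_k u$, and terms carrying $h'$ and $h''$. The third-order term is rewritten by differentiating \eqref{scalar two} once in a direction $e_k$, multiplying by $u_k$ and summing. Here the specific form $\overline{M}_p^{ij}=M_p^{ij}+\tfrac{1-t}{n-2}(g_{ab}M_p^{ab})g^{ij}$ is essential: it cancels the $\nabla_k(\Delta u)$ contributions and leaves
\[
\overline{M}_p^{ij}\nabla_k(\nabla^2 u)_{ij}=2fe^{2u}u_k+(\nabla_k f)e^{2u}-M_p^{ij}W_{ij}(du_k)+M_p^{ij}(\nabla_k A_g^t)_{ij},
\]
and the same cancellation, together with $M_p^{ij}(\overline{\nabla}^{2}u)_{ij}=M_p=fe^{2u}$ by $1$-homogeneity, gives $\overline{M}_p^{ij}\nabla_i\nabla_j u=fe^{2u}-\tfrac{2-t}{2}|\nabla u|^2\,\mathrm{tr}(M_p)+M_p^{ij}u_iu_j+O(\mathrm{tr}(M_p))$. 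Commuting $\nabla_i\nabla_j\nabla_k u$ past $\nabla_k(\nabla^2 u)_{ij}$ introduces a Riemann-curvature correction which, contracted against $\overline{M}_p^{ij}$ and $u_k$, is $O(\mathrm{tr}(\overline{M}_p)\,|\nabla u|^2)$; and $M_p^{ij}W_{ij}(du_k)$, after inserting the first-order condition above, becomes $-\tfrac{2-t}{2}h'|\nabla u|^4\,\mathrm{tr}(M_p)+h'|\nabla u|^2 M_p^{ij}u_iu_j$.

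Assembling everything, the inequality at $x_0$ takes the schematic shape
\[
0\ \ge\ e^{h}\Big[\overline{M}_p^{ij}\nabla_k\nabla_i u\,\nabla_k\nabla_j u+2fe^{2u}|\nabla u|^2+\tfrac{2-t}{2}h'|\nabla u|^4\,\mathrm{tr}(M_p)+\tfrac12\big(h''-(h')^2\big)|\nabla u|^2\overline{M}_p^{ij}u_iu_j+\mathrm{(rest)}\Big],
\]
where $\mathrm{(rest)}$ collects terms bounded by $C(1+|\nabla u|^2)\mathrm{tr}(\overline{M}_p)+C|\nabla u|$ together with further terms of order $|\nabla u|^4\mathrm{tr}(M_p)$ whose sign is a priori indefinite (coming from $-h'|\nabla u|^2 M_p^{ij}u_iu_j$ and from $\tfrac12 h'|\nabla u|^2\overline{M}_p^{ij}u_iu_j$ fed the expression for $\overline{M}_p^{ij}\nabla_i\nabla_j u$ just obtained). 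I then choose $h'>0$, so that $\tfrac{2-t}{2}h'|\nabla u|^4\mathrm{tr}(M_p)$ is a \emph{positive} term of order $|\nabla u|^4\mathrm{tr}(M_p)$ — strictly higher in $|\nabla u|$ than the troublesome curvature error $C|\nabla u|^2\mathrm{tr}(\overline{M}_p)$, hence dominating it once $|\nabla u(x_0)|$ is large — and I also take $h$ with $h''=(h')^2$, for instance $h(u)=-\log(A-u)$ with $A$ large enough that $A-u>0$, so that the $\tfrac12(h''-(h')^2)$ term disappears while $h'$ stays bounded above and below.

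The only genuinely delicate point — and the main obstacle — is the balance between the positive and the indefinite terms of order $|\nabla u|^4\mathrm{tr}(M_p)$; all of these are controlled by $M_p^{ij}u_iu_j$, which must be bounded not by the crude $|\nabla u|^2\mathrm{tr}(M_p)$ but by $m_\star|\nabla u|^2$, where $m_\star$ is the $M_p$-eigenvalue in the direction $\nabla u$. Using the eigenvector alignment of the first step together with $2-t>1$ (this is exactly where the hypothesis $t<1$, resp.\ $t\le 1$, enters, through the numerical constants), one keeps $\tfrac{2-t}{2}h'|\nabla u|^4\mathrm{tr}(M_p)$ strictly dominant over the negative contributions; if necessary the bookkeeping can be reinforced using the cone membership $\overline{\nabla}^{2}u\in\mathcal{P}_p$ together with $M_p(\overline{\nabla}^{2}u)=fe^{2u}$ to locate where the large eigenvalues of $M_p$ sit. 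Once that term wins, the displayed inequality forces $|\nabla u(x_0)|\le C$, and since $\phi$ attains its maximum at $x_0$ and $h$ is bounded, $\sup_N|\nabla u|^2\le e^{\,\sup h-\inf h}\,|\nabla u(x_0)|^2\le C_1$, with $C_1$ depending only on $f$, $A_g^t$, $n$, $p$, and $C_0$.
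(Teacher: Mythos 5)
Your overall scheme --- a maximum-principle argument on a weighted gradient quantity, use of the matrix $\overline{M}_p^{ij}=M_p^{ij}+\tfrac{1-t}{n-2}\widetilde{M}\,\delta^{ij}$ to absorb the $\Delta u$ contributions, differentiation of the equation to treat the third-order term, and a weight of the form $e^{h(u)}$ --- is exactly the paper's strategy (the paper works with $G=\log(1+\tfrac12|\nabla u|^2)+\phi(u)$, which is the logarithm of your $|\nabla u|^2e^{h}$ up to lower-order terms). The signs in the intermediate computation, including the crucial cancellation $(2-t)-\tfrac{2-t}{2}=\tfrac{2-t}{2}$ producing the helpful term $\tfrac{2-t}{2}h'|\nabla u|^4\widetilde{M}$, are also correct.

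The genuine gap is in the choice of $h$ and the treatment of $M_p^{ij}u_iu_j$. Collect the coefficients in your final inequality: the $M_p^{ij}u_iu_j$ term carries (after dividing by $|\nabla u|^2$) the coefficient $h''-(h')^2-h'$ --- the $-h'$ coming from the balance between $h'L(u)$ (contributing $+h'$) and the third-order term (contributing $-2h'$). If you impose $h''=(h')^2$, this coefficient is $-h'<0$, and the best you have in compensation is $\tfrac{2-t}{2}h'|\nabla u|^2\widetilde{M}$. Since in general $M_p^{ij}u_iu_j\le\widetilde{M}|\nabla u|^2$ with no gap, the inequality $M_p^{ij}u_iu_j\le\tfrac{2-t}{2}\widetilde{M}|\nabla u|^2$ needed to close the argument fails for $0\le t\le 1$, which includes the geometrically central case $t=0$. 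Your proposed repair by eigenvector alignment actually points the wrong way: the first-order condition $\nabla^2u(\nabla u)=-\tfrac12 h'|\nabla u|^2\nabla u$ with $h'>0$ makes $\nabla u$ an eigenvector of $\nabla^2u$ with eigenvalue $\sim -|\nabla u|^2$, and the extra terms in $\overline{\nabla}^2u$ preserve $\nabla u$ up to a bounded error, so $\nabla u$ is near the eigenvector of the \emph{smallest} eigenvalue of $\overline{\nabla}^2u$ --- but that is precisely the direction where $M_p^{ii}$ is \emph{largest} (since $\partial M_p/\partial\lambda_i$ is a decreasing function of $\lambda_i$). So there is no smallness to exploit, and in the extreme cone configuration $\lambda\approx(1,\dots,1,1-p)$, the extremal $M_p^{ii}$ is a dominant fraction of $\widetilde{M}$.

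The paper sidesteps this by choosing $\phi$ with $\phi''-(\phi')^2=2\phi'$, e.g. $\phi(s)=-\log(e^{2+2C_0}-e^{2s})$, so that the $M_p^{ij}u_iu_j$ coefficient becomes $\phi''-(\phi')^2-\phi'=\phi'>0$ and the term helps rather than hurts. The paper's inequality then reads $0\ge\epsilon_0M_p^{ij}u_iu_j+M_p^{ij}U_{ij}-C'$ with an explicit $U$ that has a dominant positive diagonal piece of size $|\nabla u|^2$; the conclusion follows by a two-case analysis, one case of which requires the determinantal lower bound $\prod_iM_p^{ii}\ge C(n,p)$ from the Appendix to convert an upper bound on the $M_p^{ii}$ into a lower bound. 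Your write-up has no substitute for either the extra twist in the weight or the appendix lemma, and as written the argument only closes for $t<0$.
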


\begin{proof}
To proceed, we introduce an auxiliary function following a strategy of Gursky and Viaclovsky \cite{GV03}, which itself builds upon earlier work by Li 
 \cite{Li90}. Compared to the auxiliary function in \cite{GV03}, our construction is much simpler, which is defined as follows:
\begin{equation*}
G=\log\beta +\phi(u) \qquad  \textrm{ with }  \beta=1+\frac{1}{2}|\nabla u|^2, 
\end{equation*}
where $\phi$ is a monotone increasing function  given by
\[
\phi(s)=-\log (e^{2+2C_{0}}-e^{2s})\,, \quad s\in [-C_{0},C_{0}].
\]
By compactness, we may assume that the maximum of $G$ over $M$ is attained at some point $x_0 \in M$. Near this maximum point, we work in a smooth orthonormal local frame $\{e_1,\cdots,e_n\}$ in the vicinity of the point $x_0$ such that the following statements hold:
\begin{itemize}\itemsep 0.4em
\item the Christoffel symbols vanishes at $x_0$, i.e. $\Gamma_{ij}^{k}=0$ ;
\item the metric matrix $\big\{g_{ij}\big\}$ is diagonal at $x_0$; 
\item  at $x_0$, the identity
\begin{equation}\label{normal coodinates}
\sum_{k,l=1}^{n}\bigg(\nabla_{i}\nabla_{j}(g^{kl})+2\nabla_{l}(\Gamma_{ij}^{k})\bigg)u_{k}u_{l}=2\sum_{k,l=1}^{n}R_{iljk}u_{k}u_{l} 
\end{equation}
holds, where $R_{iljk}$ denotes the Riemann curvature tensor components of the metric $g$.
\end{itemize}
We refer to \cite{Via02} for a detailed proof.

For notational convenience, we denote
\[
\mathcal{F}=\sum_{k=1}^{n}F^{kk}\,.
\]
It follows from \eqref{cofactor} that
\begin{equation}\label{relation}
\overline{F}^{ij}=F^{ij}+\frac{1-t}{n-2}\mathcal{F}\delta_{ij}.
\end{equation}
Furthermore, we set
\[
L=\overline{F}^{ij}e_{i}e_{j}=F^{ij}e_{i}e_{j}+\frac{1-t}{n-2}\mathcal{F}\Delta.
\]
The condition that $G$ is maximized at $x_0$ yields
\begin{equation}\label{vanishes1}
0=\nabla_{i}G=\frac{1}{\beta}u_{k}u_{ki}+ \phi'u_{i}\,, \quad i\in \{1,\cdots,n\}\,,
\end{equation}
hence the Hessian matrix satisfies
\begin{equation*}
\begin{split}
\nabla_{ij}G=& \frac{1}{\beta}\big(u_{k}u_{kij}+u_{ki}u_{kj}+\frac{1}{2}\,(g^{kl})_{ij}u_{k}u_{l}\big)-\frac{1}{\beta^2}(u_{k}u_{ki})(u_{l}u_{lj})+\phi''u_{i}u_{j}+\phi'u_{ij} \\
=& \frac{1}{\beta}\big(u_{k}u_{kij}+u_{ki}u_{kj}+\frac{1}{2}\,(g^{kl})_{ij}u_{k}u_{l}\big)+(\phi''-\phi'^2)u_{i}u_{j}+\phi'u_{ij}\,,
\end{split}
\end{equation*}
which is negative semi-definite. This yields the inequality
\begin{equation}\label{maximum principle 1}
\begin{split}
0\geq \frac{1}{\beta}\overline{F}^{ij}u_{l}u_{lij}+\frac{1}{2\beta}L(g^{kl})u_{k}u_{l}+(\phi''-\phi'^2)\overline{F}^{ij}u_{i}u_{j}+\phi'L(u).
\end{split}
\end{equation}

We first estimate the last term in \eqref{maximum principle 1}. In the local coordinates above, we have
\[
(W[u])_{ij}=u_{ij}+\frac{1-t}{n-2}(\Delta u)\delta_{ij}+\frac{2-t}{2}|\nabla u|^2\delta_{ij}-u_{i}u_{j}-A_{ij}\,,
\]
where $A^{t}_{ij}=(A^{t}_{g})_{ij}$. Therefore, 
\begin{equation}\label{Lu}
\begin{split}
L(u)=&F^{ij}u_{ij}+\frac{1-t}{n-2}\mathcal{F}\Delta u\\
=&F^{ij}\big((W[u])_{ij}-\frac{1-t}{n-2}(\Delta u)\delta_{ij}-\frac{2-t}{2}|\nabla u|^2\delta_{ij}+u_{i}u_{j}+A^{t}_{ij} \big)+\frac{1-t}{n-2}\mathcal{F}\Delta u\\
=&\psi e^{2u}-\frac{2-t}{2}|\nabla u|^2\mathcal{F}+F^{ij}\big(u_{i}u_{j}+A^{t}_{ij} \big).
\end{split}
\end{equation}

Next, we estimate the first term in \eqref{maximum principle 1}. Differentiating the equation \eqref{scalar two} and using $\Gamma_{ij}^{k}=0$ at $x_0$, we deduce from \eqref{vanishes1} that
\begin{equation*}
\begin{split}
(\psi_{l}+2\psi u_{l})e^{2u}&=F^{ij}\big[(W[u])_{ij}\big]_{l}\\
&=F^{ij}\big[u_{ij}+\frac{1-t}{n-2}(\Delta u)g_{ij}+\frac{2-t}{2}|\nabla u|^2g_{ij}-u_{i}u_{j}-A^{t}_{ij}\big]_{l}\\
&=F^{ij}\big(u_{ijl}-u_{k}(\Gamma_{ij}^{m})_{l} \big)-F^{ij}\big(u_{il}u_{j}+u_{i}u_{jl}+\nabla_{l}A^{t}_{ij}\big)\\
& \quad +\Big(\frac{1-t}{n-2}(u_{kkl}-u_{i}(\Gamma_{kk}^{i})_{l})-(2-t)\beta\phi' u_{l}\Big)\mathcal{F}\,.
\end{split}
\end{equation*}
Note that \eqref{relation} implies
\[
\overline{F}^{ij}u_{ijl}=F^{ij}u_{ijl}+\frac{1-t}{n-2}\mathcal{F}u_{kkl}.
\]
It follows that
\begin{equation*}
\begin{split}
(\psi_{l}+2\psi u_{l})e^{2u}
=&\overline{F}^{ij}u_{ijl}-\Big((2-t)\beta\phi' u_{l}+\frac{1-t}{n-2}u_{i}(\Gamma_{kk}^{i})_{l}\Big)\mathcal{F}\\
&\qquad -F^{ij}\big(u_{il}u_{j}+u_{i}u_{jl}+\nabla_{l}A^{t}_{ij}+u_{k}(\Gamma_{ij}^{k})_{l}\big)\,.
\end{split}
\end{equation*}
Multiplying by $u_{l}$ and using \eqref{vanishes1} again, we obtain
\begin{equation}\label{13terms}
\begin{split}
&\overline{F}^{ij}u_{l}u_{lij}\\
&=(\langle \nabla \psi, \nabla u\rangle +2\psi|\nabla u|^2)e^{2u}+(2-t)\beta\phi' |\nabla u|^2\mathcal{F}+\frac{1-t}{n-2}u_{i}u_{l}(\Gamma_{kk}^{i})_{l} \mathcal{F}\\
&\quad +F^{ij}\big(u_{il}u_{j}u_{l}+u_{i}u_{l}u_{jl}+u_{l}\nabla_{l}A^{t}_{ij}+u_{k}u_{l}(\Gamma_{ij}^{k})_{l}\big)\\
&=(\langle \nabla \psi, \nabla u\rangle +2\psi|\nabla u|^2)e^{2u}+ (2-t)\beta\phi' |\nabla u|^2\mathcal{F}+\frac{1-t}{n-2}u_{i}u_{l}(\Gamma_{kk}^{i})_{l} \mathcal{F}\\
&\quad +F^{ij}\big(u_{l}\nabla_{l}A^{t}_{ij}+u_{k}u_{l}(\Gamma_{ij}^{k})_{l}\big)-2\beta\phi' F^{ij}u_{i}u_{j}\,.
\end{split}
\end{equation}

Substituting \eqref{Lu} and \eqref{13terms} into \eqref{maximum principle 1}, we infer  that
\begin{equation}\label{inserting}
\begin{split}
0\geq &\,  \frac{1}{\beta}(\langle \nabla \psi, \nabla u\rangle +2\psi|\nabla u|^2)e^{2u}+\phi'\psi e^{2u}+  \frac{2-t}{2}\phi' |\nabla u|^2\mathcal{F}\\
&+\frac{1}{\beta}\frac{1-t}{n-2}u_{i}u_{l}(\Gamma_{kk}^{i})_{l} \mathcal{F}  + \frac{1}{\beta}F^{ij}\big(u_{l}\nabla_{l}A^{t}_{ij}+u_{k}u_{l}(\Gamma_{ij}^{k})_{l}\big)\\
&+(\phi''-\phi'^2-\phi')F^{ij}u_{i}u_{j}+\frac{1}{2\beta}F^{ij}e_{i}e_{j}(g^{kl})u_{k}u_{l}\\
&+\frac{1}{2\beta}\frac{1-t}{n-2}\Delta(g^{kl})u_{k}u_{l}\mathcal{F}+\frac{1-t}{n-2}(\phi''-\phi'^2)|\nabla u|^2\mathcal{F}+\phi'F^{ij}A^{t}_{ij}.
\end{split}
\end{equation}
In view of \eqref{normal coodinates}, we obtain
\[
\frac{1}{\beta}F^{ij}u_{k}u_{l}(\Gamma_{ij}^{k})_{l}+\frac{1}{2\beta}F^{ij}e_{i}e_{j}(g^{kl})u_{k}u_{l}=\frac{1}{\beta}F^{ij}R_{iljk}u_{k}u_{l}
\]
and
\[
\frac{1}{\beta}\frac{1-t}{n-2}u_{i}u_{l}(\Gamma_{kk}^{i})_{l} \mathcal{F}+\frac{1}{2\beta}\frac{1-t}{n-2}\Delta(g^{kl})u_{k}u_{l}\mathcal{F}=\frac{1}{\beta}\frac{1-t}{n-2}R_{mlmk}u_{k}u_{l}\mathcal{F}\,.
\]
Plugging these equalities into \eqref{inserting}, we deduce that
\begin{equation}\label{inserting 2}
\begin{split}
0\geq & \,  \frac{1}{\beta}(\langle \nabla \psi, \nabla u\rangle +2\psi|\nabla u|^2+\beta \phi'\psi)e^{2u}+\frac{1}{\beta}\frac{1-t}{n-2}R_{mlmk}u_{k}u_{l}\mathcal{F} \\
&+ \frac{1}{\beta}F^{ij} u_{l}\nabla_{l}A^{t}_{ij} +(\phi''-\phi'^2-\phi')F^{ij}u_{i}u_{j}+\frac{1}{\beta}F^{ij}R_{iljk}u_{k}u_{l}\\
&+\Big(\frac{1-t}{n-2}(\phi''-\phi'^2)+\frac{2-t}{2}\phi'\Big)|\nabla u|^2\mathcal{F}+\phi'F^{ij}A^{t}_{ij}.
\end{split}
\end{equation}

From the definition of $\phi$, one may verify that
\[
\begin{split}
\phi'(s)&=\frac{2e^{2s}}{e^{2+2C_0}-e^{2s}}=\frac{2}{e^{2+2C_{0}-2s}-1}\,,\\ 
\phi''(s)&=\frac{4e^{2+2C_{0}+2s}}{(e^{2+2C_0}-e^{2s})^{2}}\,,
\end{split}
\]
which guarantees
\begin{equation*} 
 \phi''-\phi'^2-\phi'=\phi' \quad  \textrm{and}
\quad  \phi'>\epsilon_{0}:=\frac{2}{e^{2+4C_{0}}-1}\,.
\end{equation*}
It follows from \eqref{inserting 2} that
\begin{equation}\label{inserting 3}
\begin{split}
0\geq & \, \epsilon_{0}F^{ij}u_{i}u_{j}+F^{ij}U_{ij}+\frac{1}{\beta}\big(\langle \nabla \psi, \nabla u\rangle +2\psi|\nabla u|^2+\epsilon_{0}\beta \psi\big)e^{2u}\\
\geq & \, \epsilon_{0}F^{ij}u_{i}u_{j}+F^{ij}U_{ij}-C'
\end{split}
\end{equation}
for some uniform constant $C'>0$, where $U=\{U_{ij}\}$ is a symmetric matrix defined by
\[
\begin{split}
U_{ij}=&\Big(\frac{2-2t}{n-2} +\frac{2-t}{2} \Big)\epsilon_{0}|\nabla u|^2\delta_{ij}+\frac{1}{\beta}\frac{1-t}{n-2}R_{mlmk}u_{k}u_{l}\delta_{ij}\\
&+\frac{1}{\beta}u_{l}\nabla_{l}A^{t}_{ij} +\frac{1}{\beta}R_{iljk}u_{k}u_{l}+\phi'A^{t}_{ij}.
\end{split}
\]
Let $\lambda_{1}(U)\geq \cdots \geq \lambda_{n}(U)$ be the eigenvalues of $U$. We divide the proof into two cases:
\begin{itemize}
\item[(i)] If $\lambda_{n}(U)\geq 1$, then \eqref{inserting 3} implies
\begin{equation}\label{gradient}
C'\geq \epsilon_{0}F^{ij}u_{i}u_{j}+\sum_{i=1}^{n}F^{ii}\,,
\end{equation}
so that
\begin{equation*}
F^{ii}\leq C' \qquad \textrm{for all } i=1,\cdots, n\,.
\end{equation*}
Making use of the Appendix Lemma \ref{det}  below, we have
\begin{equation*}
F^{ii}\geq C(n,p)C'^{1-n}  \qquad \textrm{for all } i=1,\cdots, n\,.  
\end{equation*}
Substituting this into \eqref{gradient} yields the desired bound for  $|\nabla u|(x_0)$, thus completing the proof.
\item[(ii)] Otherwise, we have $\lambda_{n}(U)\leq 1$. There exists an unit eigenvector $\xi_{n}\in T_{x_0}M$ of $U$ such that 
\[
1\geq \lambda_{n}(U)=\langle U \xi_{n},\xi_{n}\rangle \geq \Big(\frac{2-2t}{n-2} +\frac{2-t}{2} \Big)\epsilon_{0}|\nabla u|^2-\frac{C'}{\beta}|\nabla u|-C''\,,
\]
where $C'$ and $C''$ are two uniform positive constants. This again yields the gradient estimate.
\end{itemize}
Therefore, the proof is complete.
\end{proof}

\subsection{Second order estimate}\label{Second order estimate}
In this subsection, we will prove the following global second order estimate:
\begin{proposition}\label{int hessian estimate}
Assume  $\lambda(-g^{-1}A_{g}^{t})\in \Gamma$ for some $t<1$. Let $u\in C^{4}(M)$ be a solution to  \eqref{scalar curvature problem}. Then we have
\begin{equation*} 
\sup_{M}|\nabla^2u|\leq C_{2}\,,
\end{equation*}
where $C_{2}$ is a constant depends on $p,$  $C_{0}$, $C_{1}$, and  $ \psi$.
\end{proposition}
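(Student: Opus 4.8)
The plan is to bound the largest eigenvalue of $\nabla^2 u$ from above via a maximum principle argument applied to a carefully chosen test function, exploiting the ellipticity already established in Section~2. Since $\overline{\nabla}^2 u \in P_p$ (Lemma~2.1), the operator $M_p$ is concave along the positive cone $\mathcal{P}_p$ (being a normalized product of positive linear forms, $M_p^{1/\binom{n}{p}}$ is concave on $\mathcal{P}_p$), and this concavity is what makes the second-order estimate run. First I would introduce a test function of the form
\[
Q = \log \lambda_{\max}(\nabla^2 u) + \psi(|\nabla u|^2) + \mu\, u,
\]
or, to avoid the non-smoothness of $\lambda_{\max}$, the more standard substitute $Q = \log\big(W_{\xi\xi}\big) + \psi(|\nabla u|^2) + \mu u$ evaluated at a point $x_0$ and unit direction $\xi$ where $W_{\xi\xi} := (\nabla^2 u)(\xi,\xi)$ is maximal, with $\psi$ a function to be chosen (e.g. $\psi(s) = -A\log(1 + s/2)$ or a linear/exponential function) and $\mu$ a large constant; the $\psi$ term absorbs the gradient, already controlled by $C_1$ from Proposition~4.1, and the $\mu u$ term, with $u$ controlled by $C_0$, is inserted to produce a good negative term from the $f e^{2u}$ right-hand side.

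The key steps, in order: (1) at the maximum point $x_0$ of $Q$, choose normal coordinates diagonalizing both $g$ and $\overline{\nabla}^2 u$ as in Section~4, and write down the first-order conditions $\nabla_i Q = 0$ and the second-order condition that $\{\nabla_i\nabla_j Q\}$ is negative semidefinite; (2) apply the linearized operator $L = \overline{M}^{ij}_p \nabla_i\nabla_j + (\text{lower order})$ to $Q$ and use $L Q (x_0) \le 0$; (3) differentiate the equation \eqref{scalar two} twice in the direction $\xi$ to get an expression for $\overline{M}^{ij}_p \nabla_i\nabla_j W_{\xi\xi}$, where the crucial gain comes from the concavity inequality $M_p^{ij,kl} (\nabla_\xi \overline{\nabla}^2 u)_{ij}(\nabla_\xi \overline{\nabla}^2 u)_{kl} \le 0$, which lets us discard a troublesome third-derivative term; (4) commute covariant derivatives, producing curvature terms of the form $\overline{M}^{ij}_p R_{i\xi j\xi} W_{\xi\xi}$ and terms involving $\nabla A_g^t$ and $|\nabla u|^2$, all of which are controlled by $C_0$, $C_1$ and the geometry of $(N,g)$; (5) combine with the first-order relations from $\nabla Q = 0$ to replace third-order quantities $u_{\xi\xi i}$ by first- and second-order ones; (6) choose $\psi$ and $\mu$ so that the resulting inequality forces $W_{\xi\xi}(x_0) \le C_2$, using that $\widetilde{M} = \sum_k M_p^{kk} > 0$ and, where needed, the lower bound on $\sum M_p^{ii}$ from the appendix lemma together with $\sum_k \lambda_k M_p^{kk} = \binom{n}{p} M_p$ (Euler-type identity for the normalized operator).

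The main obstacle I expect is step~(3)–(4): controlling the third-order terms. Unlike the $\sigma_k$ equations, $\mathcal{M}_p$ has a more complicated second derivative structure, so verifying that concavity of $M_p^{1/\binom{n}{p}}$ on $\mathcal{P}_p$ supplies exactly the right sign and then carefully bookkeeping the commutator/curvature terms — in particular the terms coming from the $(\Delta u) g$, $|\nabla u|^2 g$ and $du\otimes du$ pieces of $\overline{\nabla}^2 u$, which couple the $\xi\xi$-direction to the full trace — will be delicate. A secondary subtlety, and the reason the hypothesis is $t<1$ rather than $t\le 1$ here, is that the coefficient $\tfrac{1-t}{n-2}$ multiplying $\widetilde{M}\,\Delta(\cdot)$ in $\overline{M}^{ij}_p$ must be strictly positive to guarantee the extra positive term $\tfrac{1-t}{n-2}\widetilde{M}\,\Delta W_{\xi\xi}$ is available to absorb bad terms; tracking where strict positivity is genuinely used will be the last thing to pin down. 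If a direct maximum-principle argument proves too unwieldy, a fallback is to first establish the estimate at an interior maximum of $W_{\xi\xi}$ alone (no $\psi$), reducing to the estimate $\Delta u \le C$, and then bootstrap, but I expect the test function above to work directly.
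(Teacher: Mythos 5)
Your overall strategy—maximum-principle argument applied to an auxiliary function of the form (largest second derivative) $+\,$(gradient term), concavity of $M_p$ on $\mathcal{P}_p$, differentiating the equation twice—is the same as the paper's. But the details diverge in ways worth noting, and a couple of your specific claims are off.

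First, the paper uses the simpler, non-logarithmic test function
\[
G = \nabla^2 u(\xi,\xi) + \tfrac{B}{2}|\nabla u|^2,
\]
with no $\mu u$ term at all. The entire mechanism is: the term $B\overline{M}^{ij}_p u_{ik}u_{jk}$ produced by differentiating $\tfrac{B}{2}|\nabla u|^2$ twice yields (after diagonalization) the good quadratic term $\tfrac{B(1-t)}{n-2}\widetilde{M}\, u_{11}^2$, while the twice-differentiated $\tfrac{2-t}{2}|\nabla u|^2 g$ piece of $\overline{\nabla}^2 u$ produces the bad quadratic term $(2-t)\widetilde{M}\,u_{11}^2$. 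Choosing $B$ with $\tfrac{B(1-t)}{n-2} - (2-t) = 2$—which is possible precisely because $t<1$—kills the bad term. This is cleaner than a $\log W_{\xi\xi} + \psi + \mu u$ ansatz, which introduces an extra negative $-\overline{M}^{ij}_p W_{\xi\xi,i}W_{\xi\xi,j}/W_{\xi\xi}^2$ term to absorb; your plan is probably salvageable but adds work for no gain here.

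Second, two of the tools you cite are not what the paper uses. The appendix Lemma \ref{det} gives a lower bound on the \emph{product} $\prod_i M_p^{ii}$; it is used in the gradient estimate (Section~4), not the Hessian estimate. What the $C^2$ estimate actually needs is the lower bound on the \emph{sum}, $\widetilde{M}=\sum_i M_p^{ii}\geq p$, quoted from Dinew. Also, your Euler-type identity is stated incorrectly: since $M_p = \mathcal{M}_p^{1/\binom{n}{p}}$ is homogeneous of degree one, it satisfies $\sum_k \lambda_k \,\partial M_p/\partial\lambda_k = M_p$, not $\binom{n}{p}M_p$; in any case the paper does not invoke it in this section. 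Finally, you correctly identify that $t<1$ enters through the strict positivity of $\tfrac{1-t}{n-2}$ in $\overline{M}^{ij}_p$, but the precise place it is used is the estimate $\overline{M}^{ii}_p u_{ii}^2 \geq \tfrac{1-t}{n-2}\widetilde{M} u_{11}^2$ (so the coefficient $B(1-t)/(n-2)$ can be made as large as needed)—worth pinning down explicitly, as it is the single point where $t\leq 1$ would not suffice.
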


\begin{proof}
For each $x\in M$, let $S(T_{x}M)$ denote the unit tangent bundle of $M$ at $x$. For a large constant $B$ to be specified later, we define the quantity
\begin{equation*}
G_{0}=\max_{x\in M}\max_{\xi\in S(T_{x}M)}\bigg\{\nabla^2 u(\xi,\xi)+\frac{B}{2}|\nabla u|^{2}\bigg\}\,.
\end{equation*}
Our goal is to estimate $G_0$. Suppose $G_0$ is attained at a point $x_0 \in M$ and a unit vector field $\xi_0 \in S(T_{x_0} M)$ such that
\[
G_{0}=\nabla^2 u(\xi_{0},\xi_{0})+\frac{B}{2}|\nabla u|^{2}(x_{0})\,.
\]
In a neighborhood of $x_0$, we may choose a smooth orthonormal frame $\{e_1, \cdots, e_n\}$ satisfying:
\begin{itemize}\itemsep0.5em
\item[(i)] $e_{1}(x_0)=\xi_{0}$;
\item[(ii)] $\nabla_{e_i}e_j=0$ at $x_0$, i.e. $\Gamma_{ij}^k(x_0)=0$ for all $i,j$ and $k$;
\item[(iii)] the symmetric matrix $\{\nabla^2u(e_{i},e_{j})\}$ is diagonal at $x_0$.
\end{itemize}

Now consider the auxiliary function
\[
G=\nabla^2u(e_1,e_1)+\frac{B}{2}|\nabla u|^2=u_{11}-\Gamma_{11}^{l}u_{l}+\frac{B}{2}|\nabla u|^2\,.
\]
By the definition of $G_0$, the function $G$ attains its maximum $G_0$ at $x_0$.
The maximum principle  at $x_0$ then implies that
\begin{equation}\label{vanishes 1}
\begin{split}
0=\nabla_{i}G &= u_{11i}-\Gamma_{11}^{l}u_{li}-(e_{i}\Gamma_{11}^{l})u_{l}+\frac{B}{2}(e_{i}g^{kl})u_{k}u_{l}+Bg^{kl}(e_{i}u_{k})u_{l}\\
&=u_{11i}-(e_{i}\Gamma_{11}^{l})u_{l}+Bu_{ik}u_{k}\,,
\end{split}
\end{equation}
and
\begin{equation}\label{max principle}
\begin{split}
0\geq \overline{F}^{ij} u_{11ij}&-2\overline{F}^{ij}(e_{i}\Gamma_{11}^{j})u_{jj}-\overline{F}^{ij}(e_{j}e_{i}\Gamma_{11}^{l})u_{l}+\frac{B}{2}\overline{F}^{ij}e_{j}e_{i}(g^{kl})u_{k}u_{l}\\&+B\overline{F}^{ij}u_{ijl}u_{l}+B\overline{F}^{ii}u_{ii}^{2}\,,
\end{split}
\end{equation}
where ${\overline{F}^{ij}}$ is defined as in \eqref{relation}.

We first proceed to compute the last two terms in \eqref{max principle}. Differentiating the equation \eqref{scalar curvature problem} in the direction $e_l$ and using the fact that $\Gamma_{ij}^{k}=0$ at $x_0$, we deduce from \eqref{vanishes1} that
\begin{equation}\label{mix term}
\begin{split}
(\psi_{l}+2\psi u_{l})e^{2u}&=F^{ij}\big[(W[u])_{ij}\big]_{l}\\
&=F^{ij}\big(u_{ijl}-u_{k}(\Gamma_{ij}^{m})_{l} \big)-F^{ij}\big(u_{il}u_{j}+u_{i}u_{jl}+\nabla_{l}A^{t}_{ij}\big)\\
& \quad +\Big(\frac{1-t}{n-2}(u_{kkl}-u_{i}(\Gamma_{kk}^{i})_{l})+(2-t)u_{k} u_{kl}\Big)\mathcal{F}\\
&=\overline{F}^{ij}u_{ijl}-F^{ij}\big(2u_{il}u_{j}+\nabla_{l}A^{t}_{ij}+u_{k}(\Gamma_{ij}^{m})_{l}\big)\\
&\quad +\big((2-t)u_{k} u_{kl}-\frac{1-t}{n-2}u_{i}(\Gamma_{kk}^{i})_{l}\big)\mathcal{F}\,.
\end{split}
\end{equation}
Let us denote $W_{ij}=(W[u])_{ij}$ for simplicity. Then
\[
W_{ij}=u_{ij}+\frac{1-t}{n-2}(\Delta u)\delta_{ij}+\frac{2-t}{2}|\nabla u|^2\delta_{ij}-u_{i}u_{j}-A^{t}_{ij}\,.
\]
Since $W[u] \in \Gamma$ and by gradient estimates, we have
\[
0\leq \sum_{i=1}^{n}W_{ii}\leq \big(1+\frac{n(1-t)}{n-2}\big)\big(\Delta u-nC'\big)
\]
for some uniform positive constant $C'$. Therefore, we obtain
\[
\sum_{i=1}^{n}(u_{ii}-C')\geq 0\,.
\]
Note that $u_{11}$ is the largest eigenvalue of $\nabla^2u$, so for all $i=1,2,\cdots,n$,
\[
\begin{split}
|u_{ii}|&=|u_{ii}-C'+C'|\leq |u_{ii}-C'|+C'\\
&\leq  (n-1)(u_{11}-C')+C'=(n-1)u_{11}+nC'\,.
\end{split}
\]
It follows from \eqref{mix term} and the gradient estimates that
\begin{equation}
B\overline{F}^{ij}u_{ijl}u_{l}\geq -BC' (1+\mathcal{F}+u_{11}\mathcal{F} )\,.
\end{equation}

For the last term in \eqref{max principle}, we obtain
\begin{equation}\label{square}
\overline{F}^{ii}u_{ii}^{2}=\sum_{i=1}^{n}\Big( F^{ii}+\frac{1-t}{n-2}\mathcal{F}\Big)u_{ii}^{2}\geq \frac{1-t}{n-2}\mathcal{F} u_{11}^{2}
\end{equation}
For the second to fourth terms in \eqref{max principle}, we have
\begin{equation}\label{234}
-2\overline{F}^{ij}(e_{i}\Gamma_{11}^{j})u_{jj}-\overline{F}^{ij}(e_{j}e_{i}\Gamma_{11}^{l})u_{l} +\frac{B}{2}\overline{F}^{ij}e_{j}e_{i}(g^{kl})u_{k}u_{l}\geq - C'(1+u_{11}\mathcal{F}+B \mathcal{F})\,.
\end{equation}
Substituting \eqref{mix term}, \eqref{square}, and \eqref{234} into \eqref{max principle}, we deduce that
\begin{equation}\label{max principle v2}
\begin{split}
BC' (1+\mathcal{F}+u_{11}\mathcal{F} )\geq &\quad \overline{F}^{ij} u_{11ij}+\frac{B(1-t)}{n-2}\mathcal{F} u_{11}^{2}\,.
\end{split}
\end{equation}

Differentiating the equation \eqref{scalar curvature problem} in the $e_1$-direction twice, we obtain
\begin{equation*}
F^{ij,kl}e_{1}(W_{ij})e_{1}(W_{kl})+F^{ij}e_{1}e_{1}(W_{ij})=(\psi_{11}+4\psi_{1}u_{1}+4fu_{1}^2+2\psi u_{11})e^{2u}\,.
\end{equation*}
Since $F$ is concave on $\Gamma$, the first term on the left-hand side must be non-positive. Therefore, we have
\begin{equation}\label{fourth term}
F^{ij}e_{1}e_{1}(W_{ij})\geq -C'(1+u_{11})\,.
\end{equation}
A direct computation at $x_0$ gives
\[
\begin{split}
e_{1}e_{1}(W_{ij})=&-e_{1}e_{1}(A^{t}_{ij})+u_{ij11}+2u_{k1}e_{1}(\Gamma_{ij}^{k})+u_{k}e_{1}e_{1}(\Gamma_{ij}^{k})  \\
&+\frac{1-t}{n-2}\big(u_{kk11}g_{ij}+u_{kk}e_{1}e_{1}(g_{ij})-2u_{l1}e_{1}(\Gamma_{kk}^{l})+u_{r}e_{1}e_{1}(\Gamma_{kk}^{l}) \big)\\
&-(u_{i11}u_{j}+u_{i}u_{j11}+2u_{i1}u_{j1})+\frac{2-t}{2}e_{1}e_{1}(g^{kl})u_{k}u_{l}\delta_{ij}\\
&+(2-t)\big(u_{k11}u_{k}+u_{11}^{2}\delta_{ij}+|\nabla u|^2e_{1}e_{1}(g_{ij}) \big)\,.
\end{split}
\]
Substituting this into \eqref{fourth term} and using \eqref{vanishes 1} to replace terms involving $u_{11i}$, we obtain
\begin{equation}\label{fourth term v2}
-C'(1+u_{11})\leq \overline{F}^{ij} u_{11ij}+C'(1+u_{11})\mathcal{F}+(2-t)\mathcal{F}u_{11}^{2}\,.
\end{equation}
Combining \eqref{max principle v2} and \eqref{fourth term v2}, we deduce that
\begin{equation*}
\begin{split}
 \Big(\frac{B(1-t)}{n-2}-(2-t)\Big)\mathcal{F} u_{11}^{2}\leq &BC' (1+\mathcal{F}+u_{11}\mathcal{F} )+C'(1+u_{11})+C'(1+u_{11})\mathcal{F}\\
 =& (1+B)C'(1+u_{11})\mathcal{F}+C'(1+u_{11}+B)\,.
\end{split}
\end{equation*}

Since we have assumed $t<1$,  we may now choose $B$ to be sufficiently large such that
\[
\frac{B(1-t)}{n-2}-(2-t)=2.
\]
Then it follows that
\[
\big(2u_{11}^{2}- (1+B)C'(1+u_{11}) \big)\mathcal{F}\leq C'(1+u_{11}+B)\,.
\]
Without loss of generality, we may assume $u_{11}$ is sufficiently large so that 
\[
u_{11}^{2}- (1+B)C'(1+u_{11})\geq 0\,,
\]
since otherwise the desired bound already holds. Therefore, we infer that
\[
C'(1+u_{11}+B)\geq \mathcal{F}u_{11}^{2}\geq \mathscr{T}_{0}u_{11}^{2},
\]
where we have used the fact that $\mathcal{F} \geq \mathscr{T}_{0}$ for some positive constant $\mathscr{T}_{0}$. Indeed, by the Cauchy-Schwarz inequality, we have
\[
\mathcal{F}=\sum_{i=1}^{n}F^{ii}\geq n\Big(\prod_{i=1}^{f}F^{ii} \Big)^{1/n}\geq n\mathscr{T}^{1/n}=:\mathscr{T}_{0}.
\]
This yields the desired upper bound for $u_{11}(x_0)$, and hence an upper bound for $G_0$. The proof is then complete.
\end{proof}

\section{Proof of Theorem \ref{main}}\label{proof}
With the $C^2$ estimates from Proposition \ref{int hessian estimate} established, the proof of Theorem \ref{main} follows by a standard continuity argument, as implemented in \cite{GV03}.

Fix a parameter $s \in [0,1]$ and consider the family of equations
\begin{equation}\label{family}
F\Big(\nabla^2u_{s}+\frac{1-t}{n-2}(\Delta u_{s})g+\frac{2-t}{2}|\nabla u_{s}|^2g-du_{s}\otimes du_{s}-\tilde{A}^{t}_{g}\Big)=\psi_{s}e^{2u_{s}}\,.
\end{equation}
Here, $\psi_s = s f + (1-s)$ and $\tilde{A}^{t}_{g} = s A^t_g - (1-s)\lambda g$, where $\lambda$ is a positive constant chosen so that
\[
\lambda F(I_{n})=1\,.
\]
Define the set
\[
S=\{s\in [0,1]\,:\, \eqref{family} \textrm{ admits a solution } u_{s}\in C^{2,\alpha}(M)\}\,.
\]
To apply the continuity method, we verify that $S$ is non-empty open and closed.

Note that when $s = 0$, the constant function $u_0 \equiv 0$ is a solution to \eqref{family}, so $0 \in S$ and $S \neq \emptyset$.  By Lemma \ref{elliptic}, the linearization $L: C^{2,\alpha}(M) \to C^{\alpha}(M)$ with
\[
\begin{split}
L(v)=\overline{F}^{ij}(W[u_{s}])\nabla_{i}\nabla_{j}v+F^{ij}\big(W[u_{s}]\big)\chi_{ij}(dv)-\psi e^{2u_{s}}v
\end{split}
\]
is an elliptic operator. The ellipticity of $L$ ensures its invertibility, which implies that $S$ is open. 

We now establish that $S$ is closed by invoking the a priori estimates from Proposition \ref{int hessian estimate}. Let $\{s_{i}\}\subset S$ be a sequence converging to some $s_{\infty}\in [0,1]$.
Under the hypothesis that $\lambda(-g^{-1}A_{g}^{t})\in \Gamma$, we also have $\lambda(-g^{-1}\tilde{A}^{t}_{g})\in \Gamma$. Therefore, the $C^2$ estimates of $u_{s}$--applicable upon replacing $\psi$ by $\psi_{s}$--persist in this setting.  To upgrade these to $C^{2,\alpha}$ estimates, we can now invoke the Evans-Krylov theorem \cite{E82,GT83}, yielding
\[
\|u_{s_i}\|_{C^{2,\alpha}(M)}\leq C\,,
\]
where $C$ is a constant independent of $i$. Differentiating the equation \eqref{family} now gives a uniformly elliptic PDE with uniformly H\"older coefficients. Applying the Schauder estimates then gives
\[
\|u_{s_i}\|_{C^{4,\alpha}(M)}\leq C.
\]
By the Arzel\`a-Ascoli theorem, a subsequence of $\{u_{s_i}\}$  converges in $ C^{4,\alpha}(M)$ to a limit function $u_{\infty}$, which solves the equation \eqref{family} at $s_{\infty}$. Hence, $s_{\infty}\in S$, and $S$ is closed. Since $S$ is nonempty, open and closed, it follows that  $S=[0,1]$.

This establishes the existence of a $C^{4,\alpha}$ solution to Equation \eqref{scalar curvature problem}. By repeatedly differentiating the equation and applying Schauder theory, we conclude that the solution is in fact smooth. Uniqueness follows from the maximum principle. This completes the proof of Theorem \ref{main}.

\section{Prescribed $\mathcal{M}_p$-curvature problem}
In this section, we will consider a so-called prescribed $\mathcal{M}_p$-curvature problem on closed Riemannian manifolds with negative curvature. In the context of $p$-geometry and $p$-potential theory, Harvey and Lawson  \cite{HL13} introduced the following $p$-convex cone
\[
\mathcal{P}_{p}:=\{ \lambda\in \R^n \,: \,\lambda_{i_1}+\cdots +\lambda_{i_{p}}>0,\, 1\leq i_1<\cdots< i_{p}\leq n\}\,,
\]
which is clearly connected for a fixed integer for a fixed integer $1\leq p\leq n$. It is worth noting that $\mathcal{P}_{1}=\Gamma_{n}$ and $\mathcal{P}_{n}=\Gamma_{1}$.

Given an open convex symmetric cone $\Gamma\subset\mathbb{R}^n$ with vertex at the origin satisfying $\Gamma_{n}\subset \Gamma \subset \Gamma_{1}$, we define the associated nonnegative constant $\mu_{\Gamma}^{+}\in [0,n-1]$  as the unique number characterized by
\[
 (-\mu^{+}_{\Gamma}, 1, \cdots,1)\in \partial\Gamma.
 \]
This constant was introduced by Li and Nguyen \cite{LN14b} in their work on such cones. The following observation is well-known.
\begin{lemma}  For any integer $p= 1,2, \cdots,n$, we have $\mu^{+}_{\mathcal{P}_{p}}=p-1.$  
\end{lemma}

Let us consider the $p$-fold sum operator $\mathcal{M}_{p}:\mathcal{P}_{p}\rightarrow \mathbb{R}$ defined by
\begin{equation}
\mathcal{M}_{p}(\lambda) = \prod_{1 \leq i_1 < i_2 < \cdots < i_{p} \leq n} (\lambda_{i_1} + \lambda_{i_2} + \cdots + \lambda_{i_p}) \,.
\end{equation}
This operator is symmetric in the eigenvalues. In contrast to the elementary symmetric functions $\sigma_k$,
expressing $\mathcal{M}_p$ in terms of elementary symmetric polynomials is nontrivial; further details can be found in \cite{D21}. A straightforward computation shows that 
$\mathcal{M}^{1/\binom{n}{p}}_{p}$ is a concave elliptic operator on $\mathcal{P}_{p}$. Moreover,  it was proved in \cite{ZZ25} that $(\mathcal{M}^{1/\binom{n}{p}}_{p},\mathcal{P}_{p})$ satisfies Condtion T. Hence, by Theorem \ref{GD}, we obtain the following existence result.
\begin{proposition}\label{p curvature problem}
Let $(M,g)$ be a closed Riemannian manifold with $\lambda(-g^{-1}A_{g}^{t})\in \mathcal{P}_{p}$ for some $t<1$, and let $\psi$ be a smooth positive function on $M$. Then there exists a unique conformal metric $\tilde{g}=[g]$ satisfying
\begin{equation}
\mathcal{M}^{1/\binom{n}{p}}_{p}(\lambda(-\tilde{g}^{-1}A_{\tilde{g}}^{t}))=\psi\,, \qquad \lambda(-\tilde{g}^{-1}A_{\tilde{g}}^{t})\in \mathcal{P}_{p}.
\end{equation}
In particular, if $\lambda(-g^{-1}\mathrm{Ric}_{g})\in \mathcal{P}_{p}$, then there exists a unique conformal metric $\tilde{g}\in [g]$ such that 
\[
\mathcal{M}^{1/\binom{n}{p}}_{p}(\lambda(-\tilde{g}^{-1}\mathrm{Ric}_{\tilde{g}})) =constant\,, \qquad \lambda(-\tilde{g}^{-1}\mathrm{Ric}_{\tilde{g}})\in \mathcal{P}_{p}. 
\]
\end{proposition}

\begin{remark}
By the definition of $\mathcal{P}_{p}$, the assumption that $\lambda(-g^{-1}\mathrm{Ric}_{g}) \in \mathcal{P}_{p}$ implies that the sum of the $p$ largest eigenvalues is negative. Hence, it represents an intermediate curvature condition that lies strictly between negative scalar curvature and negative Ricci curvature. We anticipate that this notion will attract further attention in future research.
\end{remark}

\end{sloppypar}	

\begin{thebibliography}{99}



 


\bibitem{BG08} Thomas Branson and  Ashwin Rod  Gover, 
Variational status of a class of fully nonlinear curvature
prescription problems. 
{\em Calc. Var. Partial Differential Equations.,}
32, no. 2, (2008), 253--62.

\bibitem{BV04} Simon Brendle and Jeff Viaclovsky, A variational characterization for $\sigma_{\frac{n}{2}}$.
{\em Calc. Var. Partial Diff. Equ.,}
20, no. 4 (2004), 399--402.



\bibitem{CNS85} Luis Angel Caffarelli,  Louis Nirenberg, and Joel Spruck, The Dirichlet problem for nonlinear second-order elliptic equations, III: Functions of the eigenvalues of the Hessian.  {\em Acta Math.,} 38 (2) (1985), 209--252.

\bibitem{CGY02a} Sun-Yung Alice Chang, Matthew Gursky, and Paul Yang,  An equation of Monge-Amp\`ere type in conformal
geometry, and four-manifolds of positive Ricci curvature.
{\em Ann. Math.,} 
155 (2),  (2002), 709--87.

\bibitem{CGY02b} Sun-Yung Alice Chang, Matthew Gursky, and Paul Yang, An a priori estimate for a fully nonlinear equation on
four-manifolds. 
{\em J. Anal. Math.,} 87, no. 2 (2002), 151--86.

\bibitem{DW25} Jonah A. J. Duncan and   Yi Wang,  Fully nonlinear Yamabe-type problems on non-compact manifolds. {\em Comm. Anal. Geom.,} 33 (2025), no. 5, 1101--1133.

\bibitem{DL25} Jonah A. J. Duncan and Luc Nguyen,  The $\sigma_k$-Loewner-Nirenberg problem on Riemannian manifolds for $k<\frac{n}{2}$. {\em Anal. PDE.,} 18 (2025), no. 9, 2203--2240.

\bibitem{DL26} Jonah A. J. Duncan and Luc Nguyen,  The $\sigma_k$-Loewner-Nirenberg problem on Riemannian manifolds for $k=\frac{n}{2}$ and beyond. {\em J. Funct. Anal.,} 290 (2026), no. 6, Paper No. 111306.


\bibitem{CH24} Li Chen and Yan He,  A class of fully nonlinear equations on Riemannian manifolds with negative curvature. 
{\em Calc. Var. Partial Differential Equations.,}
63 (2024), no. 6, Paper No. 158, 17 pp.

\bibitem{CGH22} Li Chen, Xi Guo, and Yan He, A class of fully nonlinear equations arising in conformal geometry. 
{\em Int. Math. Res. Not. IMRN.,} 
5, (2022), 3651--3676. 



\bibitem{CW}  Kai-Seng Chou and Xu-Jia Wang, A variational theory of the Hessian equation, {\em Comm. Pure Appl. Math.,} 54 (2001), no 9. 1029--1064. 

\bibitem{Dong06}  Hongjie Dong, Hessian equations with elementary symmetric functions. {\em Commun. Partial Differ. Equ.,} 31 (2006), 1005--1025.


\bibitem{D21} Slawomir Dinew, Interior estimates for $p$-plurisubharmonic functions.  {\em Indiana Univ. Math. J.,} 72 (2023), no. 5, 2025--2057.






 

\bibitem{E82}  Lawrence Craig Evans,  Classical solutions of fully nonlinear, convex, second-order elliptic equations.  {\em Comm. Pure Appl. Math.,} 35 (1982), no. 3, 333--363.

 


\bibitem{Guan94} Bo Guan,  The Dirichlet problem for a class of fully nonlinear elliptic equations, {\em Comm. Partial Differential Equations.,} 19  (1994), 399--416.

\bibitem{Guan99} Bo Guan, The Dirichlet problem for Hessian equations on Riemannian manifolds.  {\em Calc. Var. Partial Differential Equations.,}  8 (1999), no. 1, 45--69.

\bibitem{Guan08} Bo Guan, Complete conformal metrics of negative Ricci curvature on compact manifolds with boundary. {\em Int. Math. Res. Not. IMRN.,} (2008), Art. ID rnn 105, 25 pp.

\bibitem{Guan12} Bo Guan, Second order estimates and regularity for fully nonlinear elliptic equations on Riemannian manifolds. {\em Duke Math. J.,} 163 (2014), 1491--1524. 

\bibitem{GL96} Bo Guan and Yanyan Li, Monge-Amp\`ere equations on Riemannian manifolds. {\em J. Differential Equations.,} 132 (1996), no. 1, 126--139.

 

\bibitem{GW03} Pengfei Guan and Guofang Wang, A fully nonlinear conformal flow on local conformally flat manifolds.  {\em J. Reine Angew. Math.,} 557 (2003): 219--38.


\bibitem{GT83} David Gilbarg Gilbarg and Neil Trudinger, Elliptic partial differential equations of second order, second ed., 
{\em Springer-Verlag, Berlin,} (1983).

\bibitem{GV03} Matthew Gursky and Jeff Viaclovsky, 
Fully nonlinear equations on Riemannian manifolds with
negative curvature.  {\em Indiana Univ. Math. J.,} 52, no. 2 (2003), 399--419.

\bibitem{GV07} Matthew Gursky and Jeff Viaclovsky, Prescribing symmetric functions of the eigenvalues of the
Ricci tensor.  {\em Ann. Math.,} 166, no. 2 (2007), 475--531.

\bibitem{GW06} Yuxin Ge and Guofang Wang, On a fully nonlinear Yamabe problem.
{\em Ann. Sci. \'Ec Norm. Sup\'er.,}
39 (4), (2006), 569--98.

\bibitem{GP24} Bin Guo and Duong Hong  Phong, 
On $L^{\infty}$ estimates for fully nonlinear partial differential equations, {\em Ann. of Math.,} (2) 200 (2024), no. 1, 365--398.

\bibitem{HL09} F. Reese Harvey and H. Blaine Lawson Jr, Dirichlet duality and the nonlinear Dirichlet problem. {\em Commun. Pure Appl. Math.,} 62 (2009), 396--443.

\bibitem{HL11} F. Reese Harvey and H. Blaine Lawson Jr, Dirichlet duality and the nonlinear Dirichlet problem on Riemannian manifolds. {\em J. Differ. Geom.,} 88 (2011), 395--482.

\bibitem{HL13} F. Reese Harvey and H. Blaine Lawson Jr, $p$-convexity, $p$-plurisubharmonic and the Levi problem.  {\em Indiana Univ. Math. J.,} 62 (2013), no. 1, 149--169.

\bibitem{HL23} F. Reese Harvey and H. Blaine Lawson Jr, Determinant majorization and the work of Guo-Phong-Tong and Abja-Olive, {\em Calc. Var. Partial Differential Equations.,} 62 (2023), no. 5, Paper No. 153, 28 pp.


 
\bibitem{Iv91} Nina Ivochkina, Solution of the Dirichlet problem for equations of mth order curvature. {\em Engl. Trans. Leningr. Math. J.,} 2 (1991), no. 3, 631--654.

\bibitem{ITW04} Nina Ivochkina, Neil Trudinger, and  Xu-Jia Wang, The Dirichlet problem for degenerate Hessian equations. {\em Commun. Partial Differ. Equ.,} 29 (2004), 219--235. 



\bibitem{Kry83} Nicolai Krylov,  On degenerate nonlinear elliptic equations, II. {\em Mat. Sb.,} 121 (163) (1983) 211--232, English translation: {\em Math. USSR Sb.,} 49 (1984) 207--228.

\bibitem{Kry95} Nicolai Krylov, On the general notion of fully nonlinear second order elliptic equations. {\em Trans. Am. Math. Soc.,} 347 (1995), 857--895.

\bibitem{Kry97} Nicolai Krylov, Fully nonlinear second order elliptic equations: recent development, Dedicated to Ennio De Giorgi. {\em Ann. Sc. Norm. Super. Pisa, Cl. Sci.,} (4) 25, (1997) 569--595. 


\bibitem{LS05} Jiayu Li and Weiming Sheng,  Deforming metrics with negative curvature by a fully nonlinear flow. {\em Calc. Var. Part. Differ. Equ.,} 23, (2005), 33--50.

\bibitem{LL03} Aobing Li and  Yanyan Li,  On some conformally invariant fully nonlinear equations. {\em Comm. Pure Appl. Math.,}
56, (2003), 1416--64.

\bibitem{LL03b} Aobing Li and Yanyan Li,  On some conformally invariant fully nonlinear equations. II. Liouville, Harnack and Yamabe. {\em Acta Math.,}  195, (2005), 117--154. 

 
\bibitem{Li90} Yanyan Li, Some existence results for fully nonlinear elliptic equations of Monge-Amp\`ere type. 
{\em Comm. Pure Appl. Math.,} 43,  (1990), 233--271.

\bibitem{LN14} Yanyan Li and  Luc Nguyen,  A compactness theorem for a fully nonlinear Yamabe problem under a lower Ricci curvature bound.  
{\em J. Funct. Anal.,}
266, no. 6, (2014), 3741--71.

\bibitem{LN14b} Yanyan Li and  Luc Nguyen, Harnack inequalities and B\^ocher-type theorems for conformally invariant, fully nonlinear degenerate elliptic equations. {\em Comm. Pure Appl. Math.,} 67 (2014), no. 11, 1843--1876.



\bibitem{STW13} Weiming Sheng,  Neil Trudinger, and Xu-Jia Wang,  The Yamabe problem for higher order
curvatures. 
{\em J. Differential Geom.,}
77, (2007), 515--53.
 


 

\bibitem{Tso} Kaising Tso, On a class of Monge-Amp\`ere functional. {\em Invent. Math.,} 101 (1990) 425--448.

\bibitem{TW99} Neil Trudinger and  Xu-Jia Wang, Hessian measures. II. {\em Ann. Math.,} 2 (1999), no. 150, 579--604.

\bibitem{Tru95} Neil Trudinger,  On the Dirichlet problem for Hessian equations. {\em Acta Math.,} 175 (1995), 151--164.  

\bibitem{Urbas} John Urbas, Hessian equations on compact Riemannian manifolds, in: Nonlinear Problems in Mathematical Physics and Related Topics II, Kluwer/Plenum, New York, (2002), pp. 367--377.

\bibitem{Via00a} Jeff Viaclovsky,
Conformal geometry, contact geometry, and the calculus of variations. 
{\em Duke Math. J.,}
101, (2000), 283--316 


\bibitem{Via00b} Jeff Viaclovsky, Conformally invariant Monge-Amp\`ere equations: global solutions. 
{\em Trans. Am. Math. Soc.,}
352(9), (2000), 4371--4379.
 

\bibitem{Via02} Jeff Viaclovsky,   Estimates and existence results for some fully nonlinear elliptic equations on Riemanian manifolds. 
{\em Commun. Anal. Geom.,}
10(4), (2002), 815--846. 

\bibitem{W94} Xu-Jia Wang, A class of fully nonlinear elliptic equations and related functionals. {\em  Indiana Univ. Math. J.,} 43 (1), (1994), 25--54.


\bibitem{Yuan01} Yu Yuan, A priori estimates for solutions of fully nonlinear special Lagrangian equations. {\em Ann. Inst. Henri Poincar\'e, Anal. Non Lin\'aire.,} 18 (2001), 261--270.

\bibitem{ZZ25} Jiaogen Zhang and Tao Zheng, The Dirichlet eigenvalue problem for $p$-Monge-Amp\`ere operator. Accepted by {\em  Indiana Univ. Math. J.}

\bibitem{Zhang25} Jiaogen Zhang, The Dirichlet eigenvalue problems for some concave elliptic Hessian operators. {\em arXiv:} 2510.16748.

\end{thebibliography}
\end{document}